\font\smallit=cmti10
\newtheorem*{theorem*}{Theorem}
\newtheorem{proposition}{Proposition}
\begin{document}

\begin{center}
{\bf THE MONDRIAN PUZZLE: A BOUND CONCERNING THE M(N) = 0 CASE}
\vskip 20pt
{\bf Cooper O'Kuhn}\\
{\smallit University of Florida, Gainesville, Florida, United States}\\
\vskip 10pt
{\bf Todd Fellman}\\
{\smallit Rensselaer Polytechnic Institute, Troy, New York, United States}\\
\end{center}
\vskip 30pt

\centerline{\bf Abstract}
\noindent
In response to the Numberphile video regarding the Mondrian Puzzle\footnote{https://www.youtube.com/watch?v=49KvZrioFB0}, we provide a lower bound on how many integers less than a given threshold $x$ satisfy $M(n) \neq 0$ where $M(n)$ is the quantity in which the Mondrian Puzzle is interested, i.e. the minimal difference in area between the largest and smallest rectangle in a set of incongruent, integer-sided rectangles which tile an $n$ by $n$ square. 

\vskip 30pt

\section*{\normalsize 1. Introduction}
	The Puzzle goes as follows: Dutch artist Piet Mondrian is prescribed by an art critic to, one, no longer make art works which contain congruent rectangles, two, only paint on square canvases, and three, only paint rectangles which have integer side lengths. This critic would then score Mondrian's work by how close in area all of the rectangles in the painting are. In other words, for a natural number $n \geq 3$, we let $M(n)$ denote the minimum value of this score for an $n$ by $n$ square which is the least possible difference between the largest and smallest area in a set of incongruent, integer-sided rectangles that tile an $n \times n$ square. 

\begin{figure}[h!]
  \begin{center}
       \includegraphics[width=0.38\linewidth]{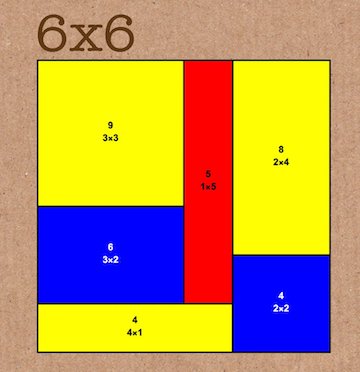}
  \end{center}
  \caption{An instance of Mondrian's Puzzle.}
\end{figure}

Figure 1, for instance, depicts a 6 by 6 canvas in which the largest rectangle has area 9 and the smallest rectangle has area 4, hence the score for the configuration is 5. In fact, this configurations achieves the lowest possible for a 6 by 6 canvas, so $M(6) = 5$.
	
The problem is generally interested in upper bounds for $M(n)$, but it has been asked whether there exists a natural number $n \geq 3$ such that $M(n) = 0$. Since the quantity $M(n)$ seems to generally increase (see [1]), such a number should not exist, and if proven, this would give a lower bound for $M(n)$. Typically, as is suggested by the proof methods of similar problems such as the squared square problem, i.e. whether there exists a square of integer side lengths that can be tiled by other smaller incongruent squares, one might try some sort of graph theoretic interpretation of the problem. Surprisingly, however, one can obtain bounds through the use of elementary number theoretic manipulations and injecting standard number theory results, namely those from Sieve Theory counting $z$-rough sets, subsets of the natural numbers in which all non-unitary divisors are greater than $z$. 

\begin{theorem*} We have
\begin{equation}
|\{n \leq x: M(n) \neq 0\}| \geq \left(\frac{e^{-\gamma}}{2} + o(1)\right)\frac{x}{\log{\log{x}}}
\end{equation}
\end{theorem*}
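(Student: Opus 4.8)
\emph{Overview.} The plan is to turn the geometric statement $M(n)=0$ into a multiplicative condition on $n$, and then to exhibit a large family of $n$ for which that condition demonstrably fails; the family will be the integers with no small prime factor, and the constant $e^{-\gamma}/2$ will come from Mertens' theorem with cut-off $(\log x)^2$. For Step~1, suppose $M(n)=0$. The puzzle asks for a dissection into $k\ge 2$ pairwise incongruent integer-sided rectangles (the one-tile dissection being excluded), and in a dissection realising $M(n)=0$ every tile has one common area $A$; summing areas, $kA=n^2$, so $k\mid n^2$ and $A=n^2/k$. Each tile is $d\times (A/d)$ with $d\mid A$, normalised so $d\le\sqrt{A}$; two tiles are congruent precisely when they share this value of $d$, and a tile fits inside the square precisely when its long side satisfies $A/d\le n$, i.e.\ $d\ge A/n=n/k$. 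Discarding the tiling constraint and keeping only that $k$ incongruent rectangles of area $A$ must fit, we get
\[
M(n)=0\ \Longrightarrow\ \exists\, k\ge 2:\ k\mid n^2\ \text{ and }\ \#\Bigl\{\, d\mid \tfrac{n^2}{k}\ :\ \tfrac{n}{k}\le d\le\sqrt{\tfrac{n^2}{k}}\,\Bigr\}\ \ge\ k ,
\]
and in particular $k\le d(n^2)=n^{o(1)}$, where $d(\cdot)$ denotes the number of divisors.

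\emph{Step 2 (the key lemma, and the main obstacle).} I would show the implication above has no admissible $k$ when every prime factor of $n$ exceeds $z:=(\log x)^2$, at least after deleting an exceptional subset of such $n\le x$ of size $o(x/\log\log x)$. If $n$ is $z$-rough and $k\mid n^2$ with $k\ge 2$, then any prime dividing $k$ divides $n$, so $k>z$; the task is to show that $A=n^2/k$ --- itself $z$-rough, and, since $k=n^{o(1)}$, lying in a multiplicative window $[\,n/k,\ \sqrt{A}\,]$ of width $\sqrt{k}=n^{o(1)}$ --- has strictly fewer than $k$ divisors in that window. This short-interval divisor count is the crux and where essentially all of the work will be. When $\sqrt{k}$ is below the least prime factor of $A$ the divisors in the window form an antichain in the divisor lattice of $A$, which already caps their number; the trouble is the remaining ranges of $k$ and, above all, the $z$-rough $n$ whose prime factors cluster on a logarithmic scale, so that many products of them fall into one short window. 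For those one must either sharpen the divisor estimate or observe that such $n$ number only $x^{o(1)}$ and absorb them into the exceptional set; it is the strength of the short-interval divisor bound available here that pins down the exponent $c=2$ in $z=(\log x)^c$, hence the numerical constant.

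\emph{Step 3 (counting and assembly).} By the fundamental lemma of sieve theory together with Mertens' theorem,
\[
\#\bigl\{\, n\le x:\ \text{every prime factor of }n\text{ exceeds }(\log x)^2\,\bigr\}
=\bigl(1+o(1)\bigr)\,x\!\!\prod_{p\le(\log x)^2}\!\!\Bigl(1-\tfrac{1}{p}\Bigr)
=\Bigl(\tfrac{e^{-\gamma}}{2}+o(1)\Bigr)\frac{x}{\log\log x},
\]
the $\tfrac{1}{2}$ arising because $\log\bigl((\log x)^2\bigr)=2\log\log x$. Deleting the exceptional set of Step~2 and invoking the lemma, each remaining $n$ has $M(n)\ne 0$, and the stated lower bound follows. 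Apart from the short-interval divisor bound of Step~2, every ingredient is either the elementary dissection bookkeeping of Step~1 or a standard sieve estimate.
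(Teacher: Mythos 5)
Your Steps 1 and 3 are sound and follow the same architecture as the paper: convert $M(n)=0$ into a divisor-counting condition, restrict to $z$-rough $n$ with $\log z=(2+o(1))\log\log x$, and apply the fundamental lemma together with Mertens to extract the constant $e^{-\gamma}/2$. The genuine gap is Step 2, which you yourself flag as ``the crux and where essentially all of the work will be'': you never prove that a $z$-rough $n$ admits no candidate $k$, and the short-interval divisor estimate you would need (counting divisors of $A=n^2/k$ in a multiplicative window of width $\sqrt{k}$, uniformly over all divisors $k$ of $n^2$) is a hard problem that the theorem does not actually require. As written, the proposal counts the rough numbers but does not show that those numbers satisfy $M(n)\neq 0$, so the statement is not proved.

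The paper closes this step with a much cruder bound that makes the difficulty disappear. The number of pairwise incongruent integer-sided rectangles of area $A$ is at most $\tau(A)\le\tau(n^2)\le\tau(n)^2$ --- no window is needed --- so $M(n)=0$ forces a divisor $k>1$ of $n^2$ with $k\le\tau(n)^2$. One then discards the $n\le x$ with $\tau(n)>g(x)\log x\log\log x$ (for $g(x)\to\infty$ slowly); by the first-moment bound $\sum_{n\le x}\tau(n)\sim x\log x$ these number $o(x/\log\log x)$. Taking $z=(g(x)\log x\log\log x)^2$ instead of $(\log x)^2$, every divisor $k>1$ of a surviving $z$-rough $n^2$ exceeds $z\ge\tau(n)^2\ge k$, a contradiction, so $M(n)\neq 0$ for all such $n$. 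Since $\log z=(2+o(1))\log\log x$ for this larger $z$ as well, the constant $e^{-\gamma}/2$ is unchanged; in particular your belief that the exponent $2$ in $z$ is ``pinned down by the strength of the short-interval divisor bound'' is mistaken --- it comes solely from the squaring in $\tau(n^2)\le\tau(n)^2$, i.e.\ from passing between divisors of $n$ and of $n^2$. Your sharper windowed condition in Step 1 is a legitimate strengthening of the paper's Proposition and might be useful for improving the result, but for the theorem as stated the trivial bound suffices.
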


The manipulations in the proof of the main theorem are chiefly reductions of the set in (1) which deform it into a $z$-rough set. For this, we'll need the following proposition.

\begin{proposition} If $M(n) = 0$, there exists a proper divisor $d$ of $n^2$ such that one has the inequality 
\begin{equation}
d\tau(d) \geq n^2. 
\end{equation} 
\end{proposition}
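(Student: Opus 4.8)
The plan is to read off, from a hypothetical score-zero tiling, the common area of its pieces and to take $d$ to be that area. Suppose $M(n) = 0$. Then the $n \times n$ square is tiled by finitely many pairwise incongruent integer-sided rectangles $R_1,\dots,R_k$, and --- since a score of $0$ means the largest and smallest areas coincide --- all of them have one and the same area, say $A$. Summing areas over the tiling gives $n^2 = \sum_{i=1}^{k} |R_i| = kA$, so $A \mid n^2$ and $k = n^2/A$.

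Before using this I would confirm that $A$ is a \emph{proper} divisor of $n^2$, i.e. $1 < A < n^2$. First, $k \ge 2$: a genuine instance of the puzzle involves more than one rectangle --- allowing the one-tile ``dissection'' $[0,n]^2$ would make $M(n) = 0$ hold vacuously for every $n$ --- so $A = n^2/k < n^2$. Second, $A \ne 1$: if $A = 1$ then every $R_i$ is a unit square, and since $n \ge 3$ there are at least two of them, violating pairwise incongruence. Hence $1 < A < n^2$, and $A$ is a proper divisor of $n^2$.

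The remaining point is an upper bound for $k$. Up to congruence, a rectangle of area $A$ with integer sides is determined by the unordered pair of its side lengths $\{w, A/w\}$, where $w$ runs over the divisors of $A$; assigning to each divisor $w \mid A$ the shape $\{w, A/w\}$ therefore gives a surjection from the $\tau(A)$-element set of divisors of $A$ onto the set of all incongruent rectangles of area $A$. Consequently there are at most $\tau(A)$ (in fact at most $\lceil \tau(A)/2\rceil$) such shapes. Since $R_1,\dots,R_k$ are pairwise incongruent and each has area $A$, we get $k \le \tau(A)$.

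Putting the two relations together, $n^2/A = k \le \tau(A)$, that is, $d\,\tau(d) \ge n^2$ with $d = A$, which is exactly (2). The argument is mostly bookkeeping; what needs care is only the delimitation of the word ``proper'' --- ruling out $A = n^2$ rests on the puzzle's (implicit) convention that more than one rectangle is used, and ruling out $A = 1$ uses incongruence --- together with the elementary observation that the count of incongruent rectangles of a fixed area is governed by its divisor function, which is precisely the feature that later lets the set in (1) be reshaped into a $z$-rough set.
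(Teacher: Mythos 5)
Your proof is correct and follows essentially the same route as the paper's: sum the areas to get $d \mid n^2$ with $|S| = n^2/d$, bound the number of incongruent integer-sided rectangles of area $d$ by $\tau(d)$ via divisor pairs, and combine. The only differences are cosmetic --- you are more careful in justifying that $d$ is proper (the paper only dismisses $d = n^2$ as the untiled square, while you also rule out $d = 1$), which is harmless added rigor rather than a new idea.
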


\begin{proof} Suppose that $M(n) = 0$. Then there exists a set $S$ of rectangles which tile the $n$ by $n$ square in which all rectangles have the same area, say, $d$. We may assume $d \neq n^2$ as this trivially corresponds to the untiled square. Notice the identity 
\begin{equation}
d|S| = n^2
\end{equation}
which comes simply from adding the areas of the rectangles to obtain the area of the square. Thus, as all quantities in (3) are positive integers, we have that $d$ must divide $n^2$. Furthermore, since all the rectangles in $S$ have area $d$ and have integer side lengths as prescribed by the problem, the product of the side lengths of each rectangle must themselves be $d$ (as per the area of a rectangle); the number of these pairs, i.e. pairs of natural numbers whose product is $d$, is certainly bounded by $\tau(d)$, so
\begin{equation}
|S| \leq \tau(d)
\end{equation}
Suppose now that (2) holds for all proper divisors $d$ of $n^2$. We then have by (4)
$$n^2 = d|S| \leq d\tau(d).$$
\end{proof}

The remainder of the proof is to obtain a lower bound on the number of positive integers $n \leq x$ that don't have a proper divisor which satisfies (2). Suppose that 
\begin{equation}
d\tau(n^2) < n^2
\end{equation}
for all proper divisors $d$ of $n^2$. Surely, this contradicts (2) since $\tau(d) < \tau(n^2)$ as the divisors of $d$ are a subset of the divisors of $n^2$. Thus, we need only to count those natural numbers $\leq x$ which satisfy (5) (which we will see is an advantage). Similarly, $\tau(n^2) \leq \tau^2(n)$, so the even stronger condition 
\begin{equation}
d\tau^2(n) < n^2
\end{equation}
also contradicts (2). We make another similar but more complicated reduction. For any monotonic increasing function $f(x)$, let $I(x, f(x))$ denote the set of all natural numbers $n \leq x$ such that $\tau(n) > f(x)\log{x}$. Let $g(x)$ be some function tending to infinity slowly, and consider the set 
$I(x, g(x)\log{\log{x}})$. By the well-known asymptotic
$$\sum_{n \leq x} \tau(n) = (1 + o(1))x\log{x},$$
we have that 
$$|I(x, g(x)\log{\log{x}})| < (1 + o(1))\frac{x}{g(x)\log{\log{x}}} = o\left(\frac{x}{\log{\log{x}}}\right).$$
In other words, if we consider those $n \leq x$ which satisfy (6) but do \textit{not} belong to $I(x, g(x)\log{\log{x}})$, we will be skipping at most $o\left(\frac{x}{\log{\log{x}}}\right)$ of the $n \leq x$. Thus, we may assume $\tau(n) \leq g(x)\log{x}\log{\log{x}}$. To summarize, we wish to count those $n$ which satisfy (6) and $\tau(n) \leq g(x)\log{x}\log{\log{x}}$. Inserting this bound on $\tau$ into (6) gives 
$$d \cdot \left(g(x)\left(\log{x}\right)\left(\log{\log{x}}\right)\right)^2 < n^2$$ 
for all proper divisors $d$ of $n^2$. By the symmetry of the divisors ($d|n^2$ is equivalent to $\frac{n^2}{d}|n^2$), we obtain 
$$ \left(g(x)\left(\log{x}\right)\left(\log{\log{x}}\right)\right)^2 < d,$$ 
and the condition that $d$ properly divide $n^2$ now turns into $d > 1$. If one lets $z = \left(g(x)\left(\log{x}\right)\left(\log{\log{x}}\right)\right)^2$, one can say that $n^2$ is $z$-rough, or that $n$ is $z$-rough, since one can easily show by descent that $n^2$ being $z$-rough is equivalent to $n$ being $z$-rough. We can now use the Fundamental Lemma of the Selberg Sieve as in [2] to count the set of $z$-rough numbers: 
$$\{n \leq x: n\text{ is }z\text{-rough}\} \geq (e^{-\gamma} + o(1))\frac{x}{\log{z}} = \left(\frac{e^{-\gamma}}{2} + o(1)\right)\frac{x}{\log{\log{x}}},$$
which gives the theorem. 

\section*{\normalsize 2. The Way Forward} The above arguments seem truly maximal in that the bounds admit very little (if only constant) losses. Thus, a new idea is likely needed in order to gain a significantly better result. The following is a suggestion as to what that may look like. We begin by proving the following result.

\begin*{lemma} If 
\begin{equation}
|\{n \leq x: M(n) = 0\}| = o(x),
\end{equation}
then $M(n) \neq 0$ for all $n$.
\end*{lemma} 

\begin{proof} Suppose that $M(n) = 0$ for some positive integer and $n$ and that the set of exceptions had zero density. By the definition of $M(n)$ and scaling, one would also have that $M(kn) = 0$ for all positive integer $k$. This gives the set of exceptions $\leq x$ a cardinality of at least $\lfloor x/j\rfloor$.
\end{proof}

Hence, if the set of natural numbers $n$ for which $M(n) = 0$ has zero density, then no such natural numbers $n$ exist at all. Either this proof provides a path to a complete solution of lesser resistance or it illuminates the difficulty of achieving a bound like (7). 

Notice that the set on the LHS of (7) is the compliment of the set we were initially bounding, and thus our search for a lower bound now turns to a much more natural (though difficult) upper bound. Of course, with our previous method, the best we could achieve is a bound of the form 
$$|\{n \leq x: M(n) = 0\}| \leq x - o(x)$$
which is quite far from what is needed. The main issue arises from the extent to which geometric knowledge about the configurations in question is limited: one only deals with the areas of the rectangles upon proving the main theorem. However, the condition $M(n) = 0$ inevitably places many more constraints on a tiling than just the areas of the rectangles it contains, and thus improvements on this front which yield better bounds are conceivable. 

\vskip 150 pt

\section*{\normalsize Acknowledgements} The authors would like to acknowledge extremely helpful conversations with Kevin Ford, Carl Pomerance, and Jesse Thorner. 

\section*{\normalsize References}
\begin{enumerate}
\item (2020), The On-Line Encyclopedia of Integer Sequences, http://oeis.org/A276523
\item  Halberstam, H., and Richert, H. E. (2013). \textit{Sieve methods.} Courier Corporation.
\end{enumerate}

\end{document}